\newtheorem{lemma}{Lemma}[section]
\newtheorem{thm}[lemma]{Theorem}
\newtheorem{cor}[lemma]{Corollary}
\newtheorem{ex}[lemma]{Example}
\newtheorem{rem}[lemma]{Remark}
\newtheorem{hyp}[lemma]{Hypothesis}
\newtheorem{definition}[lemma]{Definition}
\begin{document}

\setlength{\parindent}{0mm}

\newcommand{\id}{\textrm{id}}
\newcommand{\K}{\textrm{K}}
\newcommand{\C}{\textrm{C}}
\newcommand{\D}{\textrm{D}}
\newcommand{\FO}{\textrm{Fix}_\Omega}

\newcommand{\Aut}{\textrm{Aut}}
\newcommand{\Part}{\textrm{Part}}
\newcommand{\Sym}{\textrm{Sym}}
\newcommand{\Supp}{\textrm{Supp}}
\newcommand{\s}{\mathcal{S}}
\newcommand{\N}{\mathbb{N}}
\newcommand{\Lab}{\mathcal{L}}
\newcommand{\la}{\langle}
\newcommand{\ra}{\rangle}

\begin{center}
\Large{\textbf{Computational aspects of orbital graphs}}

\vspace{0.2cm} \small{Paula H\"ahndel, Christopher Jefferson, Markus Pfeiffer and Rebecca Waldecker}
\end{center}

\vspace{2cm}
\begin{center} \textbf{Abstract}

\end{center}

We introduce orbital graphs and discuss some of their basic properties. Then we focus on their usefulness for search algorithms for permutation groups, including finding the intersection of groups and the stabilizer of sets in a group.

\section{Introduction}

Orbital graphs are a well-known class of directed graphs coming from permutation groups; they are mentioned for example in \cite{C} and \cite{DM}.

These graphs have been considered by Heiko Thei{\ss}en in \cite{Th} for the computation of normalizers of subgroups of permutation groups, but our work does not build on his -- partly because our hypothesis is more general and partly because his results have not been published except for in his PhD thesis.

But our motivation resembles Thei{\ss}en's and is mainly computational; for example
we use orbital graphs in \cite{JPW} for refinements of partitions in order to improve search algorithms that are based on partition backtrack methods. We are convinced that the application in \cite{JPW} is just the beginning and that huge computational benefits can be gained from a better understanding of these graphs in the future.
In this article we classify those orbital graphs that are useless for computational purposes and we describe ways to 
detect these so-called ``futile'' graphs before they are even explicitly constructed. This way orbital graphs can be used most effectively in algorithms.

For the relevant notation we refer to \cite{C} and \cite{DM}, in particular for orbits, point stabilizers etc., but we introduce everything that might not be standard.
In Section 2 we discuss some basic theoretical results about orbital graphs that are probably well-known but that, to our knowledge, are mostly not contained in the existing literature. Then in Section 3 we prove specific new results that are motivated from a computational perspective and we finish with some open questions.

\section{Basic Properties of orbital graphs}

We begin by defining orbital graphs, by explaining some examples and by proving some basic properties.
Our notation is standard, and we point out that by a \textbf{proper digraph} we mean a digraph that has at least one arc such that there its reverse arc is not in the graph.
All digraphs considered here have no multiple arcs and no loops.

\begin{definition}
Let $H$ be a group of permutations on a set $\Omega$ and let $\alpha,\beta \in \Omega$ be distinct elements.
Then the \textbf{orbital graph of $H$ with base-pair $(\alpha,\beta)$} is defined in the following way:

The vertex set is $\Omega$ and the arc set is
$\{(\alpha^h,\beta^h) \mid h \in H\}$, where for all $h \in H$ and $\omega \in \Omega$ we denote by $\omega^h$ the image of $\omega$ under the permutation $h \in H$.

Once $H$ and its action on $\Omega$ are given, we denote the orbital graph of $H$ with base-pair $(\alpha,\beta)$ by
$\Gamma(H, \Omega,(\alpha,\beta))$.\\

One more general definition:

An \textbf{isolated vertex} of a digraph is a vertex with no arcs going into it or coming out of it.

Following \cite{C} and \cite{DM} we say that an orbital graph is \textbf{self-paired} if and only if, for all $\gamma, \delta \in \Omega$, it is true that $(\gamma,\delta)$ is an arc if and only if $(\delta,\gamma)$ is an arc.

\end{definition}

\begin{ex}
For symmetric groups in their natural action we always obtain a complete self-paired digraph as orbital graph, independent of the base-pair.

Next we let $H:=\la (23),(46)\ra \le \s_7$ and $\Omega:=\{1,2,3,4,5,6,7\}$.

If we choose the base-pair $(1,7)$, then this is the only arc in the orbital graph and the points $2,3,4,5,6$ are isolated.

The base-pair $(1,3)$ gives an orbital graph with arcs $(1,3)$ and $(1,2)$, and we obtain a maximum number of arcs by choosing the base-pair $(3,4)$. Then we have four arcs in total, namely $(3,4),(2,4),(3,6)$ and $(2,6)$.\\
\end{ex}

Some properties of orbital graphs can be found in \cite{C} and \cite{DM}, but we decided to include short proofs for the statements in the next lemma in order to make this article more self-contained.

\begin{hyp}\label{orbhyp}
Let $\Omega$ be a finite set, let $H \le G:=\Sym(\Omega)$ and let $\alpha,\beta \in \Omega$ be distinct. Let
$\Gamma:=\Gamma(H, \Omega,(\alpha,\beta))$ and let $A$ denote the set of arcs of $\Gamma$.
\end{hyp}

\begin{lemma}\label{basic}
Suppose that Hypothesis \ref{orbhyp} holds.
Then we have the following:

\begin{enumerate}
\item[(i)]
$\Gamma=\Gamma(H,\Omega,(\gamma,\delta))$ if and only if $(\gamma,\delta) \in A$.

\item[(ii)]
$\Gamma$ is self-paired if and only if some $h \in H$ interchanges $\alpha$ and $\beta$.

\item[(iii)]
$\alpha^H$ is precisely the set of vertices of $\Gamma$ that are the starting point of some arc.

\item[(iv)]
$\beta^H$ is precisely the set of vertices of $\Gamma$ that are the end point of some arc.

\item[(v)]
The number of arcs starting at $\alpha$ is $|\beta^{H_\alpha}|$ and
the number of arcs going into $\beta$ is
$|\alpha^{H_\beta}|$.

\end{enumerate}
\end{lemma}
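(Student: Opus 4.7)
The plan is to dispatch each item by direct unwinding of the definition, using the key observation that the arc set $A$ is precisely the orbit of the pair $(\alpha,\beta)$ under the diagonal action of $H$ on $\Omega \times \Omega$. Most parts are one-line arguments once this is set up; the only slightly subtle points are the ``only if'' direction of (i) and the ``only if'' direction of (ii).

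For (i), the backward direction is the core. If $(\gamma,\delta) \in A$, then $(\gamma,\delta) = (\alpha^g,\beta^g)$ for some $g \in H$, and then for every $h \in H$ the element $gh$ ranges over $H$ as $h$ does, so $(\gamma^h,\delta^h) = (\alpha^{gh},\beta^{gh})$ shows that the two arc sets coincide. The forward direction is immediate because $(\gamma,\delta)$ is always in the arc set of $\Gamma(H,\Omega,(\gamma,\delta))$ via $h=\id$. For (ii), if some $h \in H$ swaps $\alpha$ and $\beta$, then for any arc $(\alpha^g,\beta^g)$ the reversed pair equals $(\alpha^{hg},\beta^{hg})$, which lies in $A$; conversely, if $\Gamma$ is self-paired then the reverse of $(\alpha,\beta)$ is an arc, i.e., $(\beta,\alpha)=(\alpha^h,\beta^h)$ for some $h$, and this $h$ interchanges $\alpha$ and $\beta$.

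For (iii) and (iv), one simply reads off the definition: $\gamma$ is the start of an arc iff $\gamma = \alpha^h$ for some $h \in H$, iff $\gamma \in \alpha^H$, and symmetrically for the end-points with $\beta^H$. For (v), the arcs starting at $\alpha$ are exactly those pairs $(\alpha,\beta^h)$ with $h \in H$ and $\alpha^h = \alpha$, i.e., with $h \in H_\alpha$. The set of such endpoints is $\{\beta^h \mid h \in H_\alpha\} = \beta^{H_\alpha}$, and distinct endpoints give distinct arcs, so the count is $|\beta^{H_\alpha}|$. The argument for arcs into $\beta$ is entirely analogous, using $h \in H_\beta$.

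The main obstacle, if any, is really just bookkeeping in (i): one must verify that the two orbital graphs share the whole arc set and not merely contain the single pair $(\gamma,\delta)$. The clean way is to observe that the diagonal $H$-orbit through $(\alpha,\beta)$ equals the diagonal $H$-orbit through any of its members, which is precisely what part (i) is asserting. All other items are essentially rewritings of the definition.
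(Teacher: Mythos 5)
Your proposal is correct and follows essentially the same route as the paper: every part is unwound directly from the definition of the arc set as the diagonal $H$-orbit of $(\alpha,\beta)$, with the same counting argument for (v). The only cosmetic difference is in (ii), where the paper reduces to part (i) via $\Gamma(H,\Omega,(\beta,\alpha))$ while you verify the closure of $A$ under reversal directly; both amount to the same computation, and your write-up of (i) actually spells out the orbit-equality step that the paper leaves implicit.
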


\begin{proof}
(i) If $\Gamma=\Gamma(H,\Omega,(\gamma,\delta))$, then by definition $(\gamma,\delta)$ is an arc in $\Gamma$.

Conversely, suppose that $(\gamma,\delta)$ is an arc in $\Gamma$. Then there exists some $h \in H$ such that
$(\alpha^h,\beta^h)=(\gamma, \delta)$. Hence the orbital graph generated by $(\alpha,\beta)$ is the same as the orbital graph generated by $(\gamma, \delta)$.


(ii)
By (i) $\Gamma$ coincides with $\Gamma(H,\Omega, (\beta, \alpha))$ if and only if the arc $(\beta,\alpha)$ exists in $\Gamma$, which happens if and only if
there exists some $h \in H$ such that $\alpha^h=\beta$ and $\beta^h=\alpha$.\\

(iii)
Let $\gamma \in \alpha^H$ and let $h \in H$ be such that $\gamma=\alpha^h$.
Then $(\gamma,\beta^h)$ is an arc with starting point $\gamma$.

Conversely, if $\delta \in \Omega$ is such that $(\gamma,\delta)$ is an arc in $\Gamma$, then there exists some $h \in H$ such that $(\alpha^h,\beta^h)=(\gamma,\delta)$ and hence $\gamma=\alpha^h \in \alpha^H$.

Similar arguments show (iv).\\

(v) We just calculate that the number of arcs starting at $\alpha$ is

$|\{(\alpha,\gamma) \mid \gamma \in \Omega\}|=|\{(\alpha^h,\beta^h) \mid h \in H, \alpha^h=\alpha\}|=|\beta^{H_\alpha}|.$

The number of arcs going into $\beta$ is, by the same reasoning,

$|\{(\delta,\beta) \mid \delta \in \Omega\}|=|\{(\alpha^h,\beta^h) \mid h \in H, \beta^h=\beta\}|=|\alpha^{H_\beta}|.$\\
\end{proof}

\begin{rem}\label{rem} Some comments:

(a) Parts (iii) and (v) of the lemma, together, give the total number of arcs in $\Gamma$.
The number of arcs starting at $\alpha$ is exactly $|\beta^{H_\alpha}|$, so we obtain
 $|A|=|\alpha^H| \cdot |\beta^{H_\alpha}|$.

(b)
In (ii) it is not true that $H$ must contain the transposition $(\alpha, \beta)$.
A counterexample is provided by
$H:=\la (12)(34)\ra \le \s_4$ acting naturally on $\Omega:=\{1,2,3,4\}$ and its orbital graph
with base-pair $(1,2)$.
\end{rem}

\begin{lemma}\label{iso}
Suppose that Hypothesis \ref{orbhyp} holds.
Then the following are equivalent:

(a) ~$\Gamma$ has no isolated vertices.

(b) ~$\Omega \subseteq \alpha^H \cup \beta^H$.

\end{lemma}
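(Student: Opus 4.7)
The plan is to derive this equivalence essentially for free from parts (iii) and (iv) of Lemma \ref{basic}. The definition of an isolated vertex is that it is neither the starting point nor the end point of any arc. So I would reformulate the statement that $\Gamma$ has no isolated vertices as: every $\omega \in \Omega$ is the starting point of some arc or the end point of some arc.

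Then I would invoke Lemma \ref{basic}(iii), which identifies the set of starting points of arcs of $\Gamma$ with $\alpha^H$, and Lemma \ref{basic}(iv), which identifies the set of end points with $\beta^H$. Combining these two observations, the set of non-isolated vertices is exactly $\alpha^H \cup \beta^H$. Thus $\Gamma$ has no isolated vertices if and only if $\Omega \subseteq \alpha^H \cup \beta^H$, proving the equivalence.

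There is no real obstacle here; the statement is a direct reformulation of the content of Lemma \ref{basic}(iii) and (iv) together with the definition of an isolated vertex. The only minor care needed is to handle both implications symmetrically, but since the reformulation is an equality of sets (non-isolated vertices $=\alpha^H \cup \beta^H$), both directions follow simultaneously. I would therefore write the proof as two or three short sentences, with explicit references to Lemma \ref{basic}(iii) and (iv).
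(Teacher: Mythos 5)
Your proposal is correct and follows essentially the same route as the paper: both directions are reduced to Lemma \ref{basic}~(iii) and (iv) via the observation that the non-isolated vertices are exactly the starting points and end points of arcs, i.e.\ exactly $\alpha^H \cup \beta^H$. (The paper's own proof cites parts (ii) and (iii) at one point, but it clearly means (iii) and (iv), which is what you use.)
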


\begin{proof}

If $\Gamma$ has some isolated vertex $\omega \in \Omega$, then there is no arc starting at $\omega$ and no arc ending there. By Lemma \ref{basic}~(ii) and (iii) it follows that $\gamma \notin \alpha^H \cup \beta^H$.
Hence (b) implies (a).

Conversely we suppose that (a) holds and we let $\gamma \in \Omega$.
As $\gamma$ is not isolated, it is the starting point of some arc or the end point. In the first case $\gamma \in \alpha^H$ and in the second case $\gamma \in \beta^H$ by Lemma \ref{basic}. It follows that $\Omega \subseteq \alpha^H \cup \beta^H$ as stated in (b).
\end{proof}

\begin{cor}\label{iso2}
Suppose that Hypothesis \ref{orbhyp} holds.

If $H$ acts transitively on $\Omega$, then $\Gamma$ has no isolated vertices.
If
$H$ is not transitive and $\Omega = \alpha^H \cup \beta^H$, then $\Gamma$ is bipartite and has no isolated vertices.
\end{cor}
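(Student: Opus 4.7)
The plan is to derive both parts of the corollary essentially as direct consequences of Lemma \ref{iso} together with parts (iii) and (iv) of Lemma \ref{basic}.

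For the first statement, if $H$ acts transitively on $\Omega$, then $\alpha^H = \Omega$, so trivially $\Omega \subseteq \alpha^H \cup \beta^H$. Lemma \ref{iso} then yields immediately that $\Gamma$ has no isolated vertices.

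For the second statement, I would again invoke Lemma \ref{iso} to rule out isolated vertices, since the hypothesis $\Omega = \alpha^H \cup \beta^H$ is exactly condition (b) there. For bipartiteness, the key observation is that orbits of $H$ on $\Omega$ either coincide or are disjoint. Since $H$ is not transitive by assumption, we cannot have $\alpha^H = \beta^H = \Omega$, so $\alpha^H \neq \beta^H$, and consequently $\alpha^H \cap \beta^H = \emptyset$. Combined with the hypothesis $\Omega = \alpha^H \cup \beta^H$, this gives a partition of $\Omega$ into the two nonempty parts $\alpha^H$ and $\beta^H$. By parts (iii) and (iv) of Lemma \ref{basic}, the starting point of every arc lies in $\alpha^H$ and the end point of every arc lies in $\beta^H$, so every arc crosses between the two parts of this partition. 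Hence $\Gamma$ is bipartite with bipartition $\{\alpha^H, \beta^H\}$.

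There is no real obstacle here; the argument is a straightforward concatenation of results already established. The only subtlety worth stating carefully is the step explaining why the non-transitivity of $H$ together with $\Omega = \alpha^H \cup \beta^H$ forces $\alpha^H$ and $\beta^H$ to be disjoint, so that they genuinely form a bipartition rather than merely a covering.
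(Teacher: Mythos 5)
Your proposal is correct and follows essentially the same route as the paper, which also derives both parts from Lemma \ref{iso} together with Lemma \ref{basic}~(iii) and (iv); you simply spell out the details (the disjointness of the two orbits and the fact that every arc crosses from $\alpha^H$ to $\beta^H$) that the paper leaves implicit.
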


\begin{proof}
The first statement follows from Lemma \ref{iso} because then $\Omega=\alpha^H=\beta^H$.

The second statement follows from the same lemma together with Lemma \ref{basic}~(iii) and (iv).
\end{proof}

\begin{lemma}\label{act}
Suppose that Hypothesis \ref{orbhyp} holds. Then $H$ acts on $\Gamma$ as a group of graph automorphisms.
\end{lemma}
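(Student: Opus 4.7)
The plan is to use the natural action of $H$ on $\Omega$ (the vertex set of $\Gamma$) and show that every element of $H$ preserves the arc set $A$, which will make it a graph automorphism. Since $H \le \Sym(\Omega)$, each $h \in H$ is already a bijection on the vertex set, so the only content to verify is that arcs are sent to arcs (in both directions).

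First I would take an arbitrary $h \in H$ and an arbitrary arc $(\gamma, \delta) \in A$. By the definition of the orbital graph there exists $g \in H$ with $(\alpha^g, \beta^g) = (\gamma, \delta)$. Applying $h$ to both coordinates gives $(\gamma^h, \delta^h) = (\alpha^{gh}, \beta^{gh})$, and since $gh \in H$ this is by definition an element of $A$. Thus $h$ maps arcs to arcs.

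To confirm that $h$ is a graph automorphism and not merely a graph homomorphism, I would repeat the same argument with $h^{-1} \in H$ in place of $h$, concluding that $h^{-1}$ also maps arcs to arcs. Combining the two shows that $h$ induces a bijection of $A$ onto itself, hence is an automorphism of $\Gamma$. Finally, since the map $h \mapsto (h \text{ viewed as a permutation of vertices})$ is clearly a homomorphism from $H$ into $\Aut(\Gamma)$ (it is the restriction of the given action on $\Omega$), we get a group action of $H$ on $\Gamma$ by graph automorphisms.

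There is no real obstacle here; the argument is a direct unwinding of the definition of the arc set as an $H$-orbit on $\Omega \times \Omega$, which is $H$-invariant by construction. The only mild care needed is to present both the forward and inverse directions so that one concludes automorphism rather than just endomorphism.
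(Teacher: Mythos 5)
Your proposal is correct and follows essentially the same route as the paper: unwind the definition of the arc set as an $H$-orbit, show $h$ sends arcs to arcs, and handle the converse direction (the paper pulls back along $h^{-1}$ directly, you apply the forward argument to $h^{-1}$ — the same thing). No gaps.
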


\begin{proof}
Of course $H$ acts faithfully on the set $\Omega$ which is the vertex set of $\Gamma$.
For all vertices $\gamma \in \Gamma$ and all $h \in H$ we write $\gamma^h$ for the image of $\gamma$ under $h$ in the original permutation action.
Let $\gamma,\delta \in \Omega$.

If $(\gamma,\delta)$ is an arc, then there exists some $h \in H$ such that $(\gamma,\delta)=(\alpha^h,\beta^h)$ by definition of $\Gamma$.
Hence $(\gamma^g,\delta^g)=(\alpha^{hg},\beta^{hg})$ is an arc.

Conversely, if $(\gamma^h,\delta^h)$ is an arc, then there exists some $a \in H$ such that $(\gamma^h,\delta^h)=(\alpha^a,\beta^a)$ and hence $(\gamma,\delta)=(\alpha^{ah^{-1}},\beta^{ah^{-1}})$ is an arc.

As $H$ is a group, the induced maps are bijective and hence
every $h \in H$ induces a graph automorphism on $\Gamma$.
\end{proof}

\begin{lemma}\label{transimp}
Suppose that Hypothesis \ref{orbhyp} holds and let \(\Delta\) denote the connected component that contains \((\alpha, \beta)\).
Then every connected component of \(\Gamma\) that has size at least $2$ is isomorphic to \(\Delta\).
\end{lemma}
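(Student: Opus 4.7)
The plan is to exploit Lemma \ref{act}: since $H$ acts on $\Gamma$ by graph automorphisms, each $h \in H$ permutes the connected components of $\Gamma$. So to show that a connected component $C$ of size at least $2$ is isomorphic to $\Delta$, I will exhibit some $h \in H$ such that $h$ sends $\Delta$ bijectively onto $C$; restriction of $h$ to $\Delta$ then gives the required graph isomorphism.

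To find such an $h$, I fix a component $C$ with $|C| \geq 2$. Because $C$ is connected and has at least two vertices, it must contain at least one arc $(\gamma, \delta)$; otherwise each vertex of $C$ would be isolated in $\Gamma$ and no two of them could be joined by a path. By the definition of the arc set $A = \{(\alpha^h,\beta^h) \mid h \in H\}$, there exists $h \in H$ with $(\alpha^h, \beta^h) = (\gamma, \delta)$.

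By Lemma \ref{act}, this $h$ is a graph automorphism of $\Gamma$, and therefore maps connected components bijectively onto connected components. Since $(\alpha, \beta) \in \Delta$ and its image $(\gamma, \delta)$ lies in $C$, the image $h(\Delta)$ is the component of $\Gamma$ that contains $(\gamma, \delta)$, namely $C$. Hence $h$ restricts to a graph isomorphism $\Delta \to C$, as required.

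I do not expect a serious obstacle here; the only minor point to verify is that a connected component of size at least $2$ contains at least one arc, which is immediate from connectedness. The substance of the argument is really just the observation that Lemma \ref{basic} together with Lemma \ref{act} forces $H$ to act transitively on the non-trivial components of $\Gamma$.
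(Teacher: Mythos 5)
Your proposal is correct and follows essentially the same route as the paper: pick an arc $(\gamma,\delta)$ in the given component, use the definition of the arc set to find $h \in H$ with $(\alpha^h,\beta^h)=(\gamma,\delta)$, and apply Lemma \ref{act} to conclude that $h$ carries $\Delta$ isomorphically onto that component. Your explicit remark that a connected component of size at least $2$ must contain an arc is a small point the paper leaves implicit, but the substance of the argument is identical.
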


\begin{proof}
Let \({\Delta}_2\) denote an arbitrary connected component of \(\Gamma\) of size at least $2$ and let \((\gamma,\delta)\) be an arc in $\Delta_2$.

From the definition of orbital graphs let \(h\in H\) be such that \((\alpha^h,\beta^h)=(\gamma,\delta)\). Then \(h\) induces an automorphism on \(\Gamma\) by Lemma \ref{act} and it moves all arcs from \(\Delta\) to arcs in \(\Delta_2\).

Conversely, \(h^{-1}\) induces an automorphism on \(\Gamma\) that moves all arcs of \(\Delta_2\) into \(\Delta\). Thus it follows that \(\Delta\) and \(\Delta_2\) are isomorphic as graphs.
\end{proof}

Lemma \ref{lem:orbitalorbits} shows how to generate a set of base-pairs which generate all orbital graphs for a group \(H\). Parts (i) and (ii) show to take a representative from each orbit of \(H\) as the first element of the base-pair, and then part (iii) shows we must stabilize this point in \(H\), and take a representative from each orbit in this stabilizer for the second element of our base-pair. These base-pairs will allow us to analyse the set of orbital graphs of a group, before we construct any orbital graphs explicitly.

\begin{lemma}\label{lem:orbitalorbits}
Let $\Omega$ be a finite set and let $H \le G:=\Sym(\Omega)$.

\begin{enumerate}[(i)]
\item  Suppose that $\alpha,\beta \in \Omega$ and $\alpha \in \beta^H$. Then the set of orbital graphs
  of $H$ with base-pairs starting with $\alpha$ is equal to the set of orbital
  graphs of $H$ with base-pairs starting with $\beta$.

\item Suppose that $\alpha,\gamma \in \Omega$ and $\alpha \notin \gamma^H$. Then the set of orbital
  graphs of $H$ with base-pairs starting with $\alpha$ is disjoint from the set of
  orbital graphs of $H$ with base-pairs starting with $\gamma$.

\item Suppose that $\alpha,\beta,\gamma \in \Omega$ and that $\alpha \neq \beta$, $\alpha \neq \gamma$. Let $\Gamma_1:=\Gamma(H,\Omega,(\alpha,\beta))$ and $\Gamma_2:=\Gamma(H,\Omega,(\alpha,\gamma))$.
Then $\Gamma_1=\Gamma_2$ if and only if
  $\gamma \in \beta^{H_\alpha}$.
\end{enumerate}
\end{lemma}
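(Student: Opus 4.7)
The plan is to reduce all three parts to Lemma \ref{basic}(i), which characterizes equality of orbital graphs by membership of a single arc: $\Gamma(H,\Omega,(\gamma',\delta'))$ coincides with $\Gamma$ precisely when $(\gamma',\delta')$ is an arc of $\Gamma$. This reduces each assertion about sets of orbital graphs to a tractable statement about arcs.

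For part (i), I would pick $h \in H$ with $\beta^h = \alpha$. Given any orbital graph $\Gamma(H,\Omega,(\alpha,\delta))$ starting at $\alpha$, observe that the pair $(\beta,\delta^{h^{-1}}) = (\alpha^{h^{-1}},\delta^{h^{-1}})$ is of the form $(\alpha^g,\delta^g)$ for $g = h^{-1} \in H$ and is therefore an arc of that graph. By Lemma \ref{basic}(i), the graphs $\Gamma(H,\Omega,(\alpha,\delta))$ and $\Gamma(H,\Omega,(\beta,\delta^{h^{-1}}))$ coincide, which shows one inclusion; the reverse inclusion is obtained symmetrically, using $\alpha^{h} = \alpha$ replaced by $\alpha^{h^{-1}} = \beta$.

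For part (ii), I would argue by contradiction. If some orbital graph $\Gamma$ arose both as $\Gamma(H,\Omega,(\alpha,\beta))$ and as $\Gamma(H,\Omega,(\gamma,\delta))$, then $(\gamma,\delta)$ would be an arc of $\Gamma(H,\Omega,(\alpha,\beta))$. By Lemma \ref{basic}(iii), the starting point of any arc of $\Gamma(H,\Omega,(\alpha,\beta))$ lies in $\alpha^{H}$, so $\gamma \in \alpha^{H}$, contradicting $\alpha \notin \gamma^{H}$ (the orbits of $H$ on $\Omega$ partition $\Omega$, so $\gamma \in \alpha^H$ iff $\alpha \in \gamma^H$).

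For part (iii), I would apply Lemma \ref{basic}(i) directly: $\Gamma_1 = \Gamma_2$ is equivalent to $(\alpha,\gamma)$ being an arc of $\Gamma_1$, which by definition of the arc set of $\Gamma_1$ means there exists $h \in H$ with $\alpha^h = \alpha$ and $\beta^h = \gamma$; the first condition says $h \in H_\alpha$, and combined with the second this is precisely the statement $\gamma \in \beta^{H_\alpha}$. The main (minor) obstacle is the bookkeeping with inverses in part (i); otherwise everything is an immediate corollary of Lemma \ref{basic}.
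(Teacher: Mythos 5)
Your proposal is correct and follows essentially the same route as the paper: all three parts are reduced to Lemma \ref{basic}(i) (equality of orbital graphs is detected by a single arc), with (i) handled by translating the base-pair by an element mapping one orbit representative to the other, (ii) by noting the first coordinate of any arc lies in $\alpha^H$, and (iii) by unwinding the arc condition $(\alpha^h,\beta^h)=(\alpha,\gamma)$ into $h\in H_\alpha$ and $\gamma\in\beta^{H_\alpha}$. The only cosmetic difference is your choice of $h$ with $\beta^h=\alpha$ rather than $\alpha^h=\beta$ in part (i), which does not change the argument.
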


\begin{proof}
\begin{enumerate}[(i)]
\item Let $h \in H$ be such that $\alpha^h=\beta$. Then for all $\gamma \in \Omega$,
  it follows that $\Gamma(H,\Omega,(\alpha,\gamma))=\Gamma(H,\Omega,(\alpha^h,\gamma^h)) = \Gamma(H,\Omega,(\beta,\gamma^h))$. Conversely $\Gamma(H,\Omega,(\beta,\gamma))=\Gamma(H,\Omega,(\beta^{(h^{-1})},
  \gamma^{(h^{-1})})) = \Gamma(H,\Omega,(\alpha,\gamma^{(h^{-1})})$.

\item Suppose that the pairs $(\alpha,\beta)$ and $(\gamma,\delta)$ generate the same orbital graph
  of $H$. Then by Lemma \ref{basic}~(i) there is $h \in H$ such that $(\alpha^h,\beta^h) = (\gamma,\delta)$, which
  implies that $\alpha \in \gamma^H$. This proves the statement.

\item If $\Gamma_1=\Gamma_2$, then $(\alpha,\beta)$ and $(\alpha,\gamma)$ generate the same orbital graph. So by Lemma \ref{basic}~(i) there is $h \in H$ such that $(\alpha^h,\beta^h)=(\alpha,\gamma)$. This means that $\alpha^h=\alpha$ and therefore
 $h \in H_\alpha$, which implies that $\gamma \in \beta^{H_\alpha}$. Conversely, if $\gamma \in \beta^{H_\alpha}$ then there
 exists $h \in H_\alpha$ such that $(\alpha^h,\beta^h)=(\alpha,\gamma)$ and hence $\Gamma_1=\Gamma_2$.
\end{enumerate}
\end{proof}

After these preparatory results we can embark on the topic of usefulness of these graphs in algorithms.

\section{Usefulness of orbital graphs in algorithms}

Reasoning about arbitrary permutation groups is computationally extremely
expensive. Therefore, Leon’s partition backtrack algorithm (see \cite{L})
replaces groups with the stabilizer of an ordered orbit partition during search.
This can be seen as an approximation: A group G is a subgroup of the stabilizer
of its ordered orbit partition in any supergroup of G. Using the intersection of
the automorphism groups of all orbital graphs instead gives a smaller group and
hence a faster algorithm -- but there are exceptions where
this approach does not give any advantage.
This motivates the following definition:

\begin{definition}
Suppose that Hypothesis \ref{orbhyp} holds and that $P$ is an ordered orbit
partition of $H$. We denote the stabilizer of $P$ in $G$ by $G_P$ and we
emphasize that $G_P$ stabilizes every $H$-orbit (i.e. every cell of the ordered
partition $P$) as a set and that it acts as the full symmetric group on every
orbit.

We say that the orbital graph $\Gamma$ is \textbf{futile} if and only if $G_P$,
in its natural action on $\Omega$, induces graph automorphisms on
$\Gamma$.

Just as a reminder:

A digraph \(\Gamma=(V,A)\) is said to be a \textbf{complete digraph} if and only
if its set of arcs is $\{(\omega_1,\omega_2) \mid \omega_1,\omega_2 \in \Omega,
\omega_1 \neq \omega_2\}$.

$\Gamma$ is called a \textbf{complete bipartite digraph} if and only if there
exist pair-wise disjoint subsets $S,E$ of vertices such that $V$ is the disjoint
union of $S$ (the ``starting'' vertices) and $E$ (the ``end'' vertices) and the
set of arcs is exactly $A = \{(\omega_1,\omega_2) \mid \omega_1 \in S, \omega_2\in
E\}$.
\end{definition}

Our main theoretical result on this topic classifies futile orbital graphs. In
particular, Corollary \ref{cor:futilecheck} shows how an orbital graph can be
recognized as futile before it is even constructed. We note that the following
result does not place any restrictions about the number of orbits of $H$ on
$\Omega$. In particular there could be arbitrarily many isolated points in
$\Gamma$.

\begin{thm}\label{use}
Suppose that Hypothesis \ref{orbhyp} holds.
Then $\Gamma$ is futile if and only if it has a unique connected component
$\Delta$ of size at least $2$ and moreover one of the following holds:

(a) $\Delta$ is a complete digraph or

(b) $\Delta$ is a complete bipartite digraph.
\end{thm}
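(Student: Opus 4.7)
The plan rests on two ingredients. First, by definition $G_P$ contains the full symmetric group $\Sym(\omega^H)$ acting on each $H$-orbit $\omega^H$ and fixing everything else pointwise; second, Lemma \ref{basic}~(iii),(iv) forces every arc of $\Gamma$ to start in $\alpha^H$ and end in $\beta^H$. Together these yield an ``all-or-nothing'' dichotomy determined by whether $\alpha^H=\beta^H$, and that dichotomy is exactly what produces cases (a) and (b).

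For the direction ``structural description implies futile'', I would start from the hypothesis that $\Gamma$ has a unique component $\Delta$ of size at least $2$. By Lemma \ref{act}, $H$ permutes the components of $\Gamma$, so the vertex set of $\Delta$ is $H$-invariant and is a union of $H$-orbits. Combining this with Lemma \ref{basic}~(iii),(iv), in case (a) the vertex set of $\Delta$ must equal $\alpha^H=\beta^H$ (a single orbit), while in case (b) the two bipartition classes are precisely the distinct orbits $\alpha^H$ and $\beta^H$. Any element of $G_P$ then permutes each of these orbits independently and permutes the (isolated) vertices in the remaining orbits arbitrarily, but clearly preserves the complete, respectively complete bipartite, arc structure of $\Delta$; hence $\Gamma$ is futile.

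For the converse I would assume futility and split on whether $\alpha^H=\beta^H$. If yes, write $O:=\alpha^H=\beta^H$; by Lemma \ref{basic} all arcs lie in $O\times O$, and since $\Sym(O)\le G_P$ acts on $\Gamma$ by graph automorphisms, given any distinct $\gamma,\delta\in O$ I can choose $\sigma\in\Sym(O)$ with $(\alpha^\sigma,\beta^\sigma)=(\gamma,\delta)$, which forces $(\gamma,\delta)$ to be an arc. Hence $\Gamma$ restricted to $O$ is a complete digraph, all vertices outside $O$ are isolated, and case (a) holds. If $\alpha^H\ne\beta^H$, the two orbits are disjoint, arcs lie in $\alpha^H\times\beta^H$, and the analogous argument applied to $\Sym(\alpha^H)\times\Sym(\beta^H)\le G_P$ forces every cross pair to be an arc, giving case (b). In each case uniqueness of the large component is automatic, since all non-isolated vertices end up lying in a single connected subgraph.

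The main obstacle I anticipate is the careful bookkeeping in the ``structure implies futile'' direction, namely the identification of the vertex set (or of each bipartition class) of $\Delta$ with a specific $H$-orbit; this uses the $H$-invariance of components from Lemma \ref{act} and the characterisation of starts/ends of arcs in Lemma \ref{basic}. Once this identification is in place, the remaining verifications reduce to immediate applications of the definition of $G_P$.
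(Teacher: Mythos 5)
Your proof is correct, and it reaches the same structural dichotomy as the paper, but it is organised around a different case split and uses different elements of $G_P$. The paper splits on whether $\Gamma$ is a proper digraph or self-paired, and in the proper case first establishes two intermediate claims (no arc joins two points of the same $H$-orbit, hence every non-isolated vertex is a start or an end of arcs but not both) by applying individual transpositions $(\gamma\,\delta)$ and $(\beta\,\gamma)$ from $G_P$ to the arc $(\alpha,\beta)$. You instead split on whether $\alpha^H=\beta^H$ and invoke the full symmetric group on each orbit at once: since every arc lies in $\alpha^H\times\beta^H$ by Lemma \ref{basic}~(iii) and (iv), and $G_P$ is transitive on the relevant set of ordered pairs ($\Sym(O)$ is 2-transitive on $O=\alpha^H=\beta^H$ when the orbits coincide, and $\Sym(\alpha^H)\times\Sym(\beta^H)$ is transitive on $\alpha^H\times\beta^H$ when they are disjoint), futility immediately forces every such pair to be an arc. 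This renders the paper's intermediate steps and the proper/self-paired dichotomy unnecessary, which is a genuine streamlining; the small price is that the identification of the vertex set of $\Delta$ (respectively of its two bipartition classes) with the orbits $\alpha^H$ and $\beta^H$ in the ``structure implies futile'' direction must be carried out explicitly, which you do correctly via the $H$-invariance of $\Delta$ from Lemma \ref{act} together with Lemma \ref{basic}~(iii) and (iv). That converse direction is then essentially identical to the paper's.
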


\begin{proof}
Let $P$ be an ordered orbit partition of $H$. $G_P$ acts on the set of orbits of
$H$ and it acts faithfully on the set of vertices of $\Gamma$. Hence to answer
if $\Gamma$ is futile or not, we only have to consider arcs in $\Gamma$.

We split our proof into two cases depending on whether or not $\Gamma$ is a
proper digraph.

\textbf{Case 1:} $\Gamma$ is a proper digraph.

Then $\Gamma$ is not self-paired and Lemma \ref{basic}~(i) and (ii) imply that,
for all $\omega_1, \omega_2 \in \Omega$, there is at most one arc between them.
In the following arguments we will often refer to Lemma \ref{basic}~(iii) and
(iv) as well.

We begin with the hypothesis that $\Gamma$ is futile.

\textbf{(1)} Suppose that $\gamma,\delta \in \Omega$ are distinct and in the
same $H$-orbit. Then they are not on an arc. In particular $\alpha^H \neq
\beta^H$.

\begin{proof}
As $\gamma$ and $\delta$ are in the same $H$-orbit, they lie in the same cell of
the partition $P$. It follows from the futility of $\Gamma$ that
the transposition $(\gamma,\delta) \in \Sym(\Omega)$, which stabilizes $P$,
induces a graph automorphism on $\Gamma$. Therefore neither $(\gamma,\delta)$
nor $(\delta,\gamma)$ is an arc. From this and the fact that $(\alpha, \beta)
\in A$ it follows that $\alpha^H \neq \beta^H$.
\end{proof}

\textbf{(2)} Suppose that $\omega \in \Omega$ is on an arc. Then it is either a
starting point or an end point, but not both.

\begin{proof}
This follows from Lemma \ref{basic}~(iii) and (1).
\end{proof}

Let $S:=\alpha^H$ and $E:=\beta^H$, and let $I \subseteq \Omega$ denote the set of isolated vertices of $\Gamma$.

\textbf{(3)} $\Omega=S \dot \cup E \dot \cup I$. Moreover $S \cup E$ spans the
unique connected component of $\Gamma$ of size at least $2$, and this component
is a complete bipartite digraph.

\begin{proof}
The first statement follows from (2). Moreover there are no arcs between
vertices in $S$ or $E$, respectively, by (1). We show that all elements of $E$
are on an arc with $\alpha$:

For all $\gamma \in E$, we find the transposition $g:=(\beta,\gamma) \in G_P$,
and it fixes $\alpha^H$ point-wise by (1). The futility of $\Gamma$ implies that
$g$ maps the arc $(\alpha,\beta)$ to the arc $(\alpha,\gamma)$. Now it follows
that $A=S \times E$ and hence the digraph spanned by $S \cup E$ is a complete
bipartite digraph, and it is the unique connected component of size at least $2$
of $\Gamma$.
\end{proof}

Conversely, we suppose that $\Gamma$ has a unique connected component of size at
least $2$ and that this component is a complete bipartite digraph. We prove that
$\Gamma$ is futile.

Let $S$ and $E$ denote the subsets of the vertex set of $\Gamma$ such that all
arcs start at $S$ and end at $E$. Let $I$ be the set of isolated vertices of
$\Gamma$, so that $\Omega=S \dot \cup E \dot \cup I$.

Now $\alpha^H \subseteq S$ and the bipartite structure implies that even $\alpha^H=S$.
Similarly $\beta^H =E$. Therefore $G_P$ stabilizes the sets $S$, $E$ and $I$. We
already know that $G_P$ permutes the vertices of $\Gamma$ faithfully, so now we
look at arcs.

Let $g \in G_P$ and let $(\omega_1,\omega_2) \in A$. Then $\omega_1 \in S$,
$\omega_2 \in E$ and there exists some $h \in H$ such that
$(\alpha^h,\beta^h)=(\omega_1,\omega_2)$. Since $G_P$ stabilizes the sets $S$
and $E$, we see that $\omega_1^g \in S$ and $\omega_2^g \in E$. The completeness
property then implies that $(\omega_1^g,\omega_2^g) \in A$.

Conversely, if $(\omega_1^g,\omega_2^g) \in A$, then there exists some $h \in H$
such that $(\alpha^h,\beta^h)=(\omega_1^g,\omega_2^g)$. Now
$\omega_1=\alpha^{hg^{-1}} \in S$ and $\omega_2=\beta^{hg^{-1}} \in E$ whence
$(\omega_1,\omega_2) \in A$ by completeness.

Hence $\Gamma$ is futile.\\

\textbf{Case 2:} $\Gamma$ is not a proper digraph, which means that it is self-paired.

We begin, once more, with the hypothesis that $\Gamma$ is futile. Let $\Delta$
denote the connected component of $\Gamma$ that contains the base-pair
$(\alpha,\beta)$ and let $\gamma \in \Omega$ be an arbitrary, non-isolated
vertex.

We know that $\beta^H=\alpha^H$ by Lemma \ref{basic}~(iii) and (iv), because
$\Gamma$ is self-paired. Since some arc starts or ends in $\gamma$, we also have
that $\gamma \in \alpha^H$ and hence $\alpha,\beta,\gamma$ are all in the same
$H$-orbit and hence in a common cell of the partition $P$.
In particular the transposition $g:=(\beta,\gamma)$ is contained in $G_P$ and
because of the futility it induces an automorphism on $\Gamma$.

Then $(\alpha,\beta) \in A$ implies that $(\alpha,\gamma)=(\alpha^g,\beta^g) \in
A$. This argument shows that $\Delta$ is a complete digraph and that it is the
only connected component of size at least $2$ in $\Gamma$.\\

We conversely suppose that $\Gamma$ has a unique connected component of size at
least $2$ and that it is complete. Together with the definition of orbital
graphs (and the fact that arcs always go both ways in the present case) this
implies that $\alpha^H=\beta^H$ spans the unique connected component of size at
least $2$ and that the isolated vertices, viewed as elements of $\Omega$, are
not contained in $\alpha^H$.

We know that $G_P$ acts faithfully on the vertex set of $\Gamma$. Now let $g \in
G_P$ and let $\omega_1,\omega_2 \in \Omega$. We recall that $\alpha^H=\beta^H$
is $G_P$-invariant.

Then it follows as in Case 1, using the completeness, that $(\omega_1,\omega_2)
\in A$ if and only if $(\omega_1^g,\omega_2^g) \in A$. Consequently $G_P$ acts as a group of
automorphisms on $\Gamma$, i.e. $\Gamma$ is futile.
\end{proof}

We give an example in order to illustrate that futility of an orbital graph is not obvious and why further investigations into the usefulness of orbital graphs should be pursued.

\begin{ex}\label{2K3}
We let $G:=\s_9$ and we look at the subgroup

$H:=\langle
(12),(13),(45),(46),(14)(25)(36),(789)\rangle$. Let $\Gamma$ be the
orbital graph for $H$ with base-pair $(1,2)$. Then $\Gamma$ has the following shape:

On the vertices $1,2,3$ and $4,5,6$ we have a complete digraph, respectively,
there is no arc between the sets $\{1,2,3\}$ and $\{4,5,6\}$, and the points
$7,8$ and $9$ are isolated. This might look like a futile graph, but according
to the theorem it is not. Consider an ordered orbit partition
$P:=[1,2,3,4,5,6|7,8,9]$ of $H$.

The group $G_P$ contains the transposition $(24) \in G$.
This element interchanges the vertices $2$ and $4$ of $\Gamma$ and fixes $1$, so
this element does not induce an automorphism on $\Gamma$. (Otherwise the arc
$(1,2)$ would be mapped to the arc $(1,4)$, which does not exist). This graph
can be used to deduce, for example, that any element which swaps $1$ and $4$ must also
swap $\{2,3\}$ with $\{5,6\}$.

Hence $G_P$ does not act as a group of automorphisms on $\Gamma$ and we see that
$\Gamma$ is, in fact, not futile.
\end{ex}

It is important that we can detect futile graphs easily, without having to create them
explicitly. We will now give a collection of Lemmas which allow futile orbital graphs to
be detected using only information about orbits and stabilizers of a group, without
explicit construction of entire orbital graphs.

\begin{lemma}\label{allplnontrans}
Suppose that Hypothesis \ref{orbhyp} holds and that
\(\Omega=\alpha^H\dot{\cup}\beta^H\dot{\cup} I\), where $I \subseteq \Omega$ is the set of isolated vertices of $\Gamma$.
Then \(\Gamma\) is futile if and only if \(H_{\alpha}\) acts transitively on \(\beta^H\).

\end{lemma}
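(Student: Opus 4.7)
The plan is to derive both directions of the equivalence from Theorem \ref{use}, using the arc-count formula of Lemma \ref{basic}(v) as the bridge between futility (expressed as a bipartite completeness condition) and transitivity of $H_\alpha$ on $\beta^H$.

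First I would unpack the hypothesis. The disjoint union $\Omega=\alpha^H\dot\cup\beta^H\dot\cup I$ forces $\alpha^H\cap\beta^H=\emptyset$, so no element of $H$ sends $\alpha$ to $\beta$, and hence by Lemma \ref{basic}(ii) the graph $\Gamma$ is not self-paired. Thus alternative (a) of Theorem \ref{use} is excluded, and I am left with alternative (b). By Lemma \ref{basic}(iii) and (iv) the unique non-isolated part of $\Gamma$ has starting points exactly $\alpha^H$ and end points exactly $\beta^H$, so case (b) takes the concrete form: $\Gamma$ is futile if and only if its set of arcs equals $\alpha^H\times\beta^H$.

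For the forward direction I would assume futility and read off from the bipartite description that $(\alpha,\delta)\in A$ for every $\delta\in\beta^H$. The number of arcs out of $\alpha$ is then $|\beta^H|$, which by Lemma \ref{basic}(v) equals $|\beta^{H_\alpha}|$. Since $\beta^{H_\alpha}\subseteq\beta^H$, equality of cardinalities forces $\beta^{H_\alpha}=\beta^H$, i.e., $H_\alpha$ is transitive on $\beta^H$.

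For the converse, assuming $H_\alpha$ transitive on $\beta^H$, Lemma \ref{basic}(v) gives $(\alpha,\delta)\in A$ for every $\delta\in\beta^H$. To fill in the bipartite picture at an arbitrary $\gamma\in\alpha^H$, I would choose $h\in H$ with $\alpha^h=\gamma$ and apply Lemma \ref{act}: translating each arc $(\alpha,\delta)$ by $h$ produces the arc $(\gamma,\delta^h)$, and as $\delta$ ranges over $\beta^H$ so does $\delta^h$. Hence $(\gamma,\delta')\in A$ for every $\gamma\in\alpha^H$ and $\delta'\in\beta^H$; the non-isolated part of $\Gamma$ is thus complete bipartite, and Theorem \ref{use} yields futility. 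I do not expect a genuine obstacle here; the only mildly delicate step is invoking the disjointness of $\alpha^H$ and $\beta^H$ at the outset to rule out case (a) of Theorem \ref{use}.
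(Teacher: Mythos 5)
Your proposal is correct and follows essentially the same route as the paper: both directions are reduced via Theorem \ref{use} and Lemma \ref{basic}~(iii),(iv) to the statement that the non-isolated part of $\Gamma$ is the complete bipartite digraph on $\alpha^H\times\beta^H$. The only cosmetic differences are that you extract transitivity of $H_\alpha$ by counting arcs with Lemma \ref{basic}~(v) where the paper exhibits the stabilizing element directly, and in the converse you fill in the arcs by translating with Lemma \ref{act} where the paper instead first derives transitivity of $H_\beta$ on $\alpha^H$; both variants are sound.
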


\begin{proof}
Suppose that $\Gamma$ is futile. Then Theorem \ref{use} and Lemma \ref{basic}~(iii) and (iv) imply that
\(\Gamma\) is a complete bipartite digraph.

In particular, for all \(\delta \in \beta^H\) it follows that \((\alpha,
\delta)\in A\) and so there exists some \(h\in H\) such that
$(\alpha,\delta)=(\alpha^h,\beta^h)$. In particular $H_\alpha$ is transitive on
$\beta^H$.
Conversely we suppose that $H_\alpha$ is transitive on $\beta^H$. Hence for all
\(\beta' \in \beta^H\) there exist some \(h\in H_\alpha\) such that
$\beta^h=\beta'$.

We prove that $H_\beta$ acts transitively on $\alpha^H$, so we let $\alpha' \in
\alpha^H$ and we choose $g \in H$ such that $\alpha'=\alpha^g$. Then, using the
transitivity argument above, we let $h \in H_\alpha$ be such that
$\beta^h=\beta^{g^{-1}}$. Then $\beta^{hg}=\beta$ and $\alpha^{hg}=\alpha'$,
which shows the transitive action of $H_\beta$ on $\alpha^H$.

Now the definition of an orbital graph implies that $\Gamma$ is a complete
bipartite digraph and hence futile, by Theorem \ref{use}.
\end{proof}

We finish by giving some concrete bounds on the number of edges in futile and
non-futile orbital graphs.

\begin{lemma}\label{altcounting}
  Suppose that Hypothesis \ref{orbhyp} holds. Let $n = |\alpha^H|$, $m =
  |\beta^H|$, and $I \subseteq \Omega$ be the set of isolated vertices of $\Gamma$.

  Then $\Gamma$ is futile if one of the following hold.
  \begin{enumerate}[i)]
  \item $\beta \in \alpha^H$ and $\Gamma$ has strictly more than \(n(n-2)\)
    arcs.
  \item $\Omega = \alpha^H \dot\cup \beta^H \dot\cup I$ and $\Gamma$ has
    strictly more than \(n(m-1)\) or \(m(n-1)\) arcs.
  \end{enumerate}
\end{lemma}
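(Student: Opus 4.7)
The plan is to leverage the arc-count identity from Remark \ref{rem}(a), namely $|A| = n \cdot |\beta^{H_\alpha}|$, together with its in-degree analogue $|A| = m \cdot |\alpha^{H_\beta}|$, which follows from Lemma \ref{basic}~(v) applied to the transitive action of $H$ on $\beta^H$. In each case the aim will be to push the hypothesis on the number of arcs through one of these equalities to force the relevant stabilizer to act transitively on the other orbit, and then to invoke Theorem \ref{use} or Lemma \ref{allplnontrans} to conclude futility.

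For (i), since $\beta \in \alpha^H$ we have $\alpha^H = \beta^H$, and because $H_\alpha$ fixes $\alpha$, the orbit $\beta^{H_\alpha}$ sits inside $\alpha^H \setminus \{\alpha\}$, so $|\beta^{H_\alpha}| \le n-1$. The hypothesis $|A| > n(n-2)$ combined with $|A| = n \cdot |\beta^{H_\alpha}|$ then forces $|\beta^{H_\alpha}| = n-1$, i.e. $H_\alpha$ acts transitively on $\alpha^H \setminus \{\alpha\}$. Hence $\alpha$ sends an arc to every other vertex of $\alpha^H$; transporting this via the graph-automorphism action of $H$ on $\Gamma$ (Lemma \ref{act}) and using that $H$ is transitive on $\alpha^H$, I would conclude that the induced subgraph on $\alpha^H$ is a complete digraph. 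Since all arcs lie inside $\alpha^H$, this is the unique connected component of size at least $2$, and Theorem \ref{use} gives futility.

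For (ii), under the hypothesis $\Omega = \alpha^H \dot\cup \beta^H \dot\cup I$ we have $\beta^{H_\alpha} \subseteq \beta^H$ and $\alpha^{H_\beta} \subseteq \alpha^H$, giving bounds $|\beta^{H_\alpha}| \le m$ and $|\alpha^{H_\beta}| \le n$. The bound $|A| > n(m-1)$ together with $|A| = n \cdot |\beta^{H_\alpha}|$ forces $|\beta^{H_\alpha}| = m$, so $H_\alpha$ is transitive on $\beta^H$, and Lemma \ref{allplnontrans} concludes. Symmetrically, $|A| > m(n-1)$ together with $|A| = m \cdot |\alpha^{H_\beta}|$ forces $|\alpha^{H_\beta}| = n$, so $H_\beta$ is transitive on $\alpha^H$; the hypothesis $\Omega = \alpha^H \dot\cup \beta^H \dot\cup I$ is symmetric in $\alpha$ and $\beta$, so the mirror of Lemma \ref{allplnontrans} applies and again delivers futility. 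The argument is essentially arithmetic, so the only mild subtlety is explicitly recording the in-degree version $|A| = m \cdot |\alpha^{H_\beta}|$ and noting the symmetric form of Lemma \ref{allplnontrans}; there is no real obstacle.
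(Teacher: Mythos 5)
Your proof is correct, and the underlying idea is the same as the paper's: in both arguments the key fact is that every non-isolated starting vertex has the same out-degree, so $|A|$ is $n$ times that out-degree, and exceeding the stated bound forces the out-degree to be maximal, hence the component to be complete (resp.\ complete bipartite), hence $\Gamma$ futile by Theorem \ref{use}. The packaging differs slightly: the paper argues contrapositively, picking a missing arc $(\gamma,\delta)$ and bounding the out-degree at $\gamma$ by $n-2$ (resp.\ $m-1$), whereas you use the exact identity $|A| = n\cdot|\beta^{H_\alpha}|$ from Remark \ref{rem}(a) to force $|\beta^{H_\alpha}| = n-1$ (resp.\ $=m$), i.e.\ transitivity of $H_\alpha$ on $\alpha^H\setminus\{\alpha\}$ (resp.\ on $\beta^H$); in effect you prove Corollary \ref{cor:futilecheck} along the way and, in case (ii), delegate the completeness argument to Lemma \ref{allplnontrans}. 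One small point to tidy: the ``mirror'' of Lemma \ref{allplnontrans} (futility from $H_\beta$ transitive on $\alpha^H$) is not literally that lemma with $\alpha$ and $\beta$ swapped, since that would concern the reversed orbital graph $\Gamma(H,\Omega,(\beta,\alpha))$; the cleanest fix is to note that the argument inside the proof of Lemma \ref{allplnontrans} already shows $H_\beta$ transitive on $\alpha^H$ implies $H_\alpha$ transitive on $\beta^H$ (or to compare $n\cdot|\beta^{H_\alpha}| = |H:H_{\alpha\beta}| = m\cdot|\alpha^{H_\beta}|$), and then apply the lemma as stated. With that sentence added, your argument is complete.
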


\begin{proof}
  To prove (i) suppose that \(\gamma,\delta \in \alpha^H\) are distinct and such
  that \((\gamma, \delta) \notin A\). Let \(r\) be the number of arcs starting in
  \(\gamma\). Now \((\gamma,\gamma)\) and \((\gamma,\delta)\) are not in $A$, so
  it follows that \(r\leq n-2\).
  We recall that $H$ is transitive on $\alpha^H$, and hence all connected
  components of $\Gamma$ have size at least $2$, by Corollary \ref{iso2}. 

  In particular $\gamma$ is contained in a connected component of $\Gamma$ of
  size at least two, so we deduce from Lemma \ref{basic}~(iii) and (iv) and Lemma
  \ref{transimp} that for every vertex of \(\Gamma\), the number of arcs starting
  there is \(r\). Consequently $|A|= n\cdot r\leq n\cdot (n-2)$. This means,
  conversely, that $\Gamma$ is a complete digraph on $\alpha^H$ as soon as it has
  strictly more than \(n\cdot (n-2)\) arcs.

  To show (ii) suppose that there are \(\gamma \in \alpha^H\) and \(\delta \in
  \beta^H\) such that \((\gamma, \delta)\notin A\). Let \(r\) be the number of
  arcs starting in \(\gamma\). As all arcs starting in \(\gamma\) end in a vertex of
  \(\beta^G\backslash \{\delta\}\) it follows that \(r\leq m-1\).
  Let $\omega \in \alpha^H$. Then it follows from Lemma \ref{basic}~(iii) that the
  number of arcs starting in \(\omega\) is
  $|\{(\omega_1,\beta^{g})\mid g\in H, \alpha^g=\omega_1\}|$.

  Hence $|A|=n\cdot r\leq n\cdot (m-1)$.

  By counting the number of arcs ending in some vertex we obtain, in a similar way, that
  $|A| \le m\cdot (n-1)$ as well.
  Hence if \(\Gamma\) has strictly more than \(n\cdot (m-1)\) or \(m\cdot (n-1)\)
  arcs, then \(\Gamma\) is a complete bipartite digraph.
\end{proof}

In practice we use Corollary \ref{cor:futilecheck}, which combines Lemma \ref{altcounting} with Remark
\ref{rem} to efficiently identify futile
orbital graphs before they are constructed.

\begin{cor}\label{cor:futilecheck}
Suppose that Hypothesis \ref{orbhyp} holds. Then $\Gamma$ is
futile if and only if one of the following conditions is true:

\begin{enumerate}[i)]
\item $\beta \in \alpha^H$, and $|\beta^{H_\alpha}| = |\alpha^H|$.
\item $\beta \not\in \alpha^H$ and $|\beta^{H_\alpha}| = |\beta^H|$.
\end{enumerate}
\end{cor}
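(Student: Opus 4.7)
The plan is to derive the corollary from Theorem \ref{use} by rephrasing each of its two shape possibilities (complete or complete bipartite) as an exact arc count, and then using the identity $|A| = |\alpha^H|\cdot|\beta^{H_\alpha}|$ from Remark \ref{rem}(a) to translate that count into a statement about $\beta^{H_\alpha}$. Write $n := |\alpha^H|$ and $m := |\beta^H|$.

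In the case $\beta \in \alpha^H$, so that $\alpha^H = \beta^H$, I would first observe that $\Gamma$ cannot be simultaneously futile and a proper digraph: Case 1 inside the proof of Theorem \ref{use} shows that futility of a proper $\Gamma$ forces $\alpha^H \neq \beta^H$. Hence futile $\Gamma$ must be self-paired and, by Theorem \ref{use}(a), its unique non-trivial component is a complete digraph on $\alpha^H$, giving $|A| = n(n-1)$. Via Remark \ref{rem}(a) this is equivalent to $|\beta^{H_\alpha}|$ attaining its maximal admissible value among subsets of $\alpha^H\setminus\{\alpha\}$, which is the content of condition (i). The reverse direction comes for free from Lemma \ref{altcounting}(i), since $|A| = n(n-1) > n(n-2)$ suffices for futility.

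For the complementary case $\beta\notin\alpha^H$, Lemma \ref{basic}~(ii) forces $\Gamma$ to be a proper digraph (no element of $H$ can swap $\alpha$ with $\beta$ once they lie in distinct orbits), so Theorem \ref{use} leaves only the complete bipartite option, with bipartition classes $\alpha^H$ and $\beta^H$. Futility is therefore equivalent to $|A| = nm$, which by Remark \ref{rem}(a) reads $|\beta^{H_\alpha}| = m = |\beta^H|$, i.e.\ condition (ii). Again Lemma \ref{altcounting}(ii) gives the converse directly from the arc count.

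The main obstacle is conceptual rather than computational: one must rule out the ``wrong'' Theorem \ref{use} shape in each case — the complete bipartite shape when $\beta\in\alpha^H$ (impossible because the starting and ending vertex sets coincide, contradicting disjointness of the two bipartition classes), and the complete shape when $\beta\notin\alpha^H$ (impossible because in a complete digraph every vertex is both a starting point and an end point, whereas here $\alpha^H \cap \beta^H = \emptyset$). Once these two geometric observations are in place, the corollary reduces to the elementary arc-count identity of Remark \ref{rem}(a) combined with the thresholds of Lemma \ref{altcounting}.
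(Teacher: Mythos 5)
Your proof is correct and follows essentially the same route as the paper's: Theorem \ref{use} to reduce futility to the complete/complete-bipartite shapes, Remark \ref{rem}(a) for the arc count $|A|=|\alpha^H|\cdot|\beta^{H_\alpha}|$, and the thresholds of Lemma \ref{altcounting} for the converse direction; you are merely more explicit about ruling out the ``wrong'' shape in each case. Note that, exactly like the paper's own proof, what you actually establish in case (i) is $|\beta^{H_\alpha}|=|\alpha^H|-1$ --- the literal condition $|\beta^{H_\alpha}|=|\alpha^H|$ can never hold, since $\beta^{H_\alpha}\subseteq\alpha^H\setminus\{\alpha\}$ --- so the stated corollary contains a typo that both your argument and the paper's silently correct.
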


\begin{proof}
\begin{enumerate}
\item[(i)] We are in case (i) of Lemma \ref{altcounting}. By Remark \ref{rem} the orbital graph
has size  $|\alpha^H| \cdot |\beta^{H_\alpha}|$. The only way this can be larger than $|\alpha^H|(|\alpha^H|-2)$
is if $|\beta^{H_\alpha}| + 1 \geq |\alpha^H|$. As $\beta \in \alpha^H$, $\beta^{H_\alpha}$ is a proper subset of $\alpha^H$ (the subset is proper because it does not contain $\alpha$). Therefore $|\beta^{H_\alpha}| + 1 = |\alpha^H|$.
\item[(ii)] We are in case (ii) of Lemma \ref{altcounting}. Again by Remark \ref{rem} the orbital graph
has size $|\alpha^H| \cdot |\beta^{H_\alpha}|$. The only way this can be larger than $|\alpha^H|(|\beta^H|-1)$
is if $|\beta^H| \leq |\beta^{H_\alpha}|$. As $\beta^H$ contains $\beta^{H_\alpha}$, this implies $|\beta^H| = |\beta^{H_\alpha}|$.
\end{enumerate}
\end{proof}

\subsection{Transitive Groups}

For transitive groups, it is simpler to identify the futile orbital graphs.
These were the first groups where we discovered that some orbital graphs are
more useful.

\begin{lemma}\label{everypointless}
Suppose that Hypothesis \ref{orbhyp} holds and that $H$ acts transitively on $\Omega$.
\begin{enumerate}[i)]
\item\label{ep1} If $H$ acts 2-transitively on $\Omega$, then $\Gamma$ is futile.
\item\label{ep2} If $\Gamma$ is futile, then $H$ acts 2-transitively on $\Omega$ (and hence all orbital graphs are futile).
\end{enumerate}
\end{lemma}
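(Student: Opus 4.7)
The plan is to reduce both directions directly to Theorem \ref{use}, using Corollary \ref{iso2} to rule out isolated vertices and then carefully separating the complete versus complete bipartite cases.

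For part (\ref{ep1}), assume $H$ is 2-transitive on $\Omega$. I would show directly that every pair $(\gamma,\delta)$ of distinct vertices lies in the arc set $A$: since $H$ is 2-transitive, there is $h\in H$ with $(\alpha^h,\beta^h)=(\gamma,\delta)$, so $(\gamma,\delta)\in A$ by the definition of the orbital graph. Combined with Corollary \ref{iso2} (transitivity rules out isolated vertices), this shows that $\Gamma$ has a unique connected component of size at least $2$, namely $\Omega$ itself, and that component is a complete digraph. Theorem \ref{use}(a) then yields futility.

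For part (\ref{ep2}), assume $\Gamma$ is futile. Corollary \ref{iso2} again gives that $\Gamma$ has no isolated vertices, so by Theorem \ref{use} there is a unique connected component of size at least $2$ and it is either complete or complete bipartite. I would rule out the bipartite case as follows: in a complete bipartite digraph, the set $S$ of starting vertices and the set $E$ of end vertices are disjoint, but by Lemma \ref{basic}~(iii) and (iv) we have $S=\alpha^H$ and $E=\beta^H$, and transitivity forces $\alpha^H=\beta^H=\Omega$, contradicting disjointness. Hence the unique non-trivial component is a complete digraph on all of $\Omega$, so for every pair of distinct $\gamma,\delta\in\Omega$ there exists $h\in H$ with $(\alpha^h,\beta^h)=(\gamma,\delta)$, which is exactly 2-transitivity. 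The parenthetical addendum ``hence all orbital graphs are futile'' then follows from (\ref{ep1}) applied to an arbitrary base-pair.

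The only delicate point is the elimination of the complete bipartite case in (\ref{ep2}); everything else is a straightforward invocation of Theorem \ref{use} together with the two-line definition of 2-transitivity. I do not anticipate a significant obstacle here since the disjointness of the parts in the bipartite case of Theorem \ref{use} is essentially built into the definition of a complete bipartite digraph given just before that theorem.
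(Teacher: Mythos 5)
Your proposal is correct and follows essentially the same route as the paper: both directions reduce to Theorem \ref{use}, with 2-transitivity giving a complete digraph in one direction and transitivity excluding the complete bipartite case in the other. The only difference is that you spell out the elimination of the bipartite case (via Lemma \ref{basic}~(iii), (iv) and the disjointness of the parts), which the paper merely asserts in one line; this is a harmless elaboration, not a different argument.
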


\begin{proof}
For (\ref{ep1}) we suppose that \(H\) acts 2-transitively on \(\Omega\). Then
whenever \(\gamma, \delta \in \Omega\) are distinct, there exits some \(h \in H\) such that \((\alpha^h,\beta^h)=(\gamma,\delta)\) and hence
\(\Gamma\) is a complete digraph.
By Theorem \ref{use} it follows that $\Gamma$ is futile.

For (\ref{ep2}) we suppose that \(\Gamma\) is futile and we deduce, again by
Theorem \ref{use}, that \(\Gamma\) is a complete digraph or a complete bipartite
digraph. The second case is impossible because \(H\) is transitive on
\(\Omega\). So \(\Gamma\) is a complete digraph and for any two distinct
elements \(\gamma, \delta \in \Omega\), we deduce that \((\gamma, \delta) \in
A\). Then by definition of an orbital graph, there is \(h\in H\) such that
\((\alpha^h,\beta^h)=(\gamma,\delta)\). Hence \(H\) acts 2-transitively on
\(\Omega\) and the last statement follows from (\ref{ep1}).
\end{proof}

So we see that for transitive groups if one orbital graph is futile, then all of
them are. Lemma \ref{everypointless} lets us quickly detect this, as the level of transitivity of
a group can be efficiently calculated.

In general these bounds from the lemmas cannot be improved, as the following example shows.

\begin{ex}
Let \(\Omega:=\{1,2,3,4\}\) and \(G:=\langle(1243),(12)(34)\rangle \leq
\Sym(\Omega) \). Then the orbital graph \(\Gamma:=\Gamma(G,(1,2),\Omega)\) has
exactly the \(8=4\cdot(4-2)\) arcs \((1,2),(2,1),(1,3),(3,1),(2,4),(4,2),(3,4)\)
and \((4,3)\) which is the bound in Lemma \ref{altcounting} (i).

Let \(\Omega:=\{1,2,3,4,5,6\}\) and \(G:=\langle(123)(456),(13)(45)\rangle \leq
\Sym(\Omega) \). Then the orbital graph \(\Gamma:=\Gamma(G,(1,4),\Omega)\) has
exactly the \(6=3\cdot(3-1)\) arcs \((1,4),(1,6),(2,4),(2,5),(3,5),\) and
\((3,6)\) which is the bound in Lemma \ref{altcounting} (ii).
\end{ex}


\section{Concluding remarks}

As mentioned earlier, this work on orbital graphs is motivated by applications in search algorithms for permutation groups. A systematic approach is needed for many open problems and potential applications, and orbital graphs are an interesting class of graphs in their own right. Therefore we phrase some questions, with only some of them being directly related to applications.

\begin{itemize}
\item
Instead of just separating the futile graphs from the useful ones for our algorithms, is it possible to create a finer distinction?

\item
Higman's Theorem (see for example p.68 in \cite{DM}) says that for transitive groups, primitivity can be detected from the orbital graphs.

For imprimitive groups, being able to detect blocks quickly and bring them into a ``usefulness analysis'' of the graph would be beneficial for computational questions (see Exercise 3.2.14 in \cite{DM}).

\item
The theory of association schemes seems to be closely related to orbital graphs. What applications does this have in computational algebra and how do our results relate to this?
\end{itemize}

There is work in progress on most of these questions.\\

\textbf{Acknowledgements.}

All authors thank the DFG (\textbf{Wa 3089/6-1}) and the EPSRC CCP CoDiMa
(\textbf{EP/M022641/1}) for supporting this work. The second author would
like to thank the Royal Society, and the EPSRC (\textbf{EP/M003728/1}).
The third author would
like to acknowledge support from the OpenDreamKit Horizon 2020 European Research
Infrastructures Project (\#676541).

\end{document}